 \DeclareFontFamily{U}{wncy}{}
    \DeclareFontShape{U}{wncy}{m}{n}{<->wncyr10}{}
    \DeclareSymbolFont{mcy}{U}{wncy}{m}{n}
    \DeclareMathSymbol{\Sh}{\mathord}{mcy}{"58}
\newtheorem{theorem}{Theorem}[section]
\newtheorem{lemma}[theorem]{Lemma}
\newtheorem{proposition}[theorem]{Proposition}
\numberwithin{equation}{section}
\theoremstyle{remark}
\newtheorem{remark}[theorem]{Remark}
\newcommand{\Z}{\mathbf{Z}}
\newcommand{\Q}{\mathbf{Q}}
\newcommand{\F}{\mathbf{F}}
\begin{document}

\title[Two infinite families with rank $>1$]{Two infinite families of elliptic curves with rank greater than one}

\author[J.~Hatley]{Jeffrey Hatley}
\address[Hatley]{
Department of Mathematics\\
Union College\\
Bailey Hall 202\\
Schenectady, NY 12308\\
USA}
\email{hatleyj@union.edu}

\author[J.~Stack]{Jason Stack}
\address[Stack]{
Department of Mathematics\\
Union College\\
Bailey Hall 202\\
Schenectady, NY 12308\\
USA}
\email{stackj@union.edu}

\begin{abstract}
We prove, using elementary methods, that each member of the infinite families of elliptic curves given by $E_m
\colon y^2=x^3 - x + m^6$ and $E_m'
\colon y^2=x^3 + x - m^6$ have rank at least $2$ and 3, respectively, under mild restrictions on $m$. We also prove stronger results for $E_m$ and $E_m'$ using more technical machinery.
\end{abstract}


\subjclass[2010]{11G05; 14H10  (secondary).}
\keywords{elliptic curves, ranks}

\maketitle

\section{Introduction}\label{section:intro}

The celebrated Mordell-Weil theorem states that the $\Q$-rational points on an elliptic curve $E/\Q$ form a finitely-generated abelian group, so that we have a group isomorphism
\begin{equation*}
E(\Q) \simeq \Z^r \oplus \Z_{\mathrm{tor}}
\end{equation*}
for some integer $r \geq 0$ (called the \textit{rank} of $E$) and for some finite abelian group $\Z_{\mathrm{tor}}$ (called the \textit{torsion subgroup} of $E$). Both the rank and the torsion subgroup have been the focus of much research for many decades. For instance, Mazur \cite{Mazur-ideal} classified all possible torsion subgroups that can occur, and more recently, Harron and Snowden \cite{HarronSnowden} computed the frequency with which these torsion subgroups appear.

The rank is much less well-understood. For instance, it is unknown whether elliptic curves with arbitrarily large rank exist. The largest known rank is currently $28$, found by Noam Elkies \cite{dujella}. For many years it was a folklore conjecture that the ranks of elliptic curves are unbounded, but recent heuristics have started to cast some doubt; for example, work of Park, Poonen, Voight, and Wood \cite{PPVW} suggests that all but finitely many curves have rank at most $21$.

On the other hand, it has long been expected that the ``average rank'' of elliptic curves (appropriately counted) is $1/2$, meaning that half of all elliptic curves have rank $0$, half have rank $1$, and that curves with rank $\geq 2$ have density zero and thus are exceedingly rare. Indeed, this conjecture has been bolstered by a recent probabilistic model due to Lozano-Robledo \cite[Theorem 1.2]{Alvaro-heuristic}), and the average rank has been proven to be at most $7/6$ by Bhargava and Shankar \cite{BhargavaShankhar}.

It follows that examples of elliptic curves with ``large rank'' (i.e. $r \geq 2$) are inherently interesting. In this paper, we exhibit two infinite families of elliptic curves with large rank, and we prove that they possess this property by direct and elementary means. In particular, we first prove the following.

\begin{theorem}\label{thm:main-intro-rk3}
Let $m>1$ be an odd integer divisible by $3$. Then the elliptic curve
\[
E_m \colon y^2 = x^3 - x + m^6
\]
has rank at least $3$.
\end{theorem}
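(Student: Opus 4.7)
The plan is to exhibit three $\Q$-rational points on $E_m$ and prove they are $\Z$-linearly independent modulo torsion. Direct substitution yields four obvious points
\[
P_1 = (0,m^3), \quad P_2 = (1,m^3), \quad P_3 = (-1,m^3), \quad P_4 = (-m^2,m).
\]
The horizontal line $y = m^3$ meets $E_m$ at exactly $P_1, P_2, P_3$, so the chord-tangent law forces $P_1 + P_2 + P_3 = O$; hence $P_3$ is redundant and the task is to prove $\{P_1, P_2, P_4\}$ is independent. For the torsion bound, the hypothesis $3 \mid m$ ensures that the reduction of $E_m$ modulo $3$ is $y^2 = x^3 - x$ over $\F_3$, which has good reduction ($\Delta \equiv 1 \pmod 3$) and only $4$ points, forming $(\Z/2\Z)^2$; thus $E_m(\Q)_{\tor} \hookrightarrow (\Z/2\Z)^2$. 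A rational $2$-torsion point would correspond to an integer root of $x^3 - x + m^6$ (by the rational root theorem), but $x(x^2-1) = -m^6$ admits no such solution for $m > 1$ (a short size estimate), so the torsion subgroup is trivial.

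For the independence of $P_1, P_2, P_4$, I propose to promote the question to the elliptic surface $\mathcal{E} : y^2 = x^3 - x + t^6$ over $\Q(t)$, where the $P_i$ become sections. The discriminant $-16(27t^{12} - 4)$ has $12$ simple zeros away from $0$ and $\infty$, so a minimal model is a rational elliptic surface with twelve type-$I_1$ fibers; consequently all bad-fiber contributions to Shioda's height pairing vanish. An intersection-theoretic computation (noting that $P_1, P_2$ meet at $t = \infty$ with multiplicity $2$, $P_1, P_4$ meet at $t = 0$ with multiplicity $1$, $P_2, P_4$ are disjoint, and none of the three meets the zero section) then yields the height pairing matrix
\[
R = \begin{pmatrix} 2 & -1 & 0 \\ -1 & 2 & 1 \\ 0 & 1 & 2 \end{pmatrix}, \qquad \det R = 4 \neq 0,
\]
so $P_1, P_2, P_4$ span a rank-$3$ sublattice of $\mathcal{E}(\Q(t))$. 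Silverman's specialization theorem then implies that $E_m(\Q)$ has rank at least $3$ for all but finitely many integers $m$.

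The main obstacle is making Silverman's theorem effective so that the conclusion holds for every $m$ in the stated family rather than just for a cofinite subset. One attempt is an explicit specialization bound (along the lines of Silverman's or Masser's quantitative results) to identify the excluded values of $m$ and verify the conclusion case-by-case; alternatively, one can bypass specialization by computing the Néron-Tate regulator of $P_1, P_2, P_4$ directly on $E_m$, using explicit local height formulas together with the disparity $h(x(P_4)) = 2\log m$ versus $h(x(P_1)) = h(x(P_2)) = 0$ to certify $\det R > 0$ uniformly in $m$.
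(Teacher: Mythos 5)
Your proposal contains a genuine gap, and you have in fact identified it yourself: the passage from independence of the sections $P_1,P_2,P_4$ on the elliptic surface $\mathcal{E}/\Q(t)$ to independence on the individual fibers $E_m$ goes through Silverman's specialization theorem, which only yields the conclusion for all but finitely many $m$. The theorem demands the conclusion for \emph{every} odd $m>1$ with $3\mid m$, and the two remedies you sketch (an effective specialization bound, or a direct computation of the N\'{e}ron--Tate regulator on $E_m$) are named but not carried out. As written, the argument proves a weaker statement. A telltale sign is that your independence argument never uses the hypotheses that $m$ is odd and $3\mid m$; those hypotheses are precisely what the paper's proof consumes. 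The paper argues via descent in $E_m(\Q)/2E_m(\Q)$: it shows (quoting Brown--Myers) that $[P_1]$ and $[P_3]$ generate a subgroup of order $4$, and then shows that none of $R$, $Q+R$, $P+R$, $P+Q+R$ is a double of a rational point, by reducing the doubling formula modulo $3$ (where $3\mid m$ forces a contradiction of the form $0\equiv 1$) and modulo $4$ (where oddness of $m$ enters). Together with triviality of torsion this gives a subgroup of order $8$ in $E_m(\Q)/2E_m(\Q)$, hence rank at least $3$, uniformly in $m$. (The paper's Section 5 does essentially your second proposed remedy --- a direct height-matrix computation on $E_m$, arriving at the same Gram matrix up to the sign change coming from your choice of $P_2=(1,m^3)=-P_1-P_3$ --- and thereby removes the congruence hypotheses entirely; but that computation is actually performed there, not merely proposed.)

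Two smaller points. Your geometric computation itself checks out: $\mathcal{E}$ is a rational elliptic surface with twelve $I_1$ fibers, the three sections are disjoint from the zero section, the stated intersection numbers are correct (in particular $P_2$ and $P_4$ have equal $x$-coordinates at $t=\pm i$ but opposite $y$-coordinates, so they are indeed disjoint), and $\det R=4$. Second, in the torsion step the equation $|x|(x^2-1)=m^6$ is not ruled out by ``a short size estimate'': the left side takes arbitrarily large values, so you need the arithmetic argument that the coprime factors $|x|$ and $x^2-1$ must each be sixth powers, and $x^2-1=c^6$ forces $(|x|-c^3)(|x|+c^3)=1$, which is impossible. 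This is exactly the style of argument the paper uses for $E'_m$ in its Theorem 2.2, and it is easily supplied, but it should be stated correctly.
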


\begin{remark}
When $m=0$ or $m=1$, our methods do not apply, and indeed, one may check that the corresponding ranks are $0$ and $1$, respectively.
\end{remark}

Our approach is inspired by similar work of Brown and Myers, who studied the curves given by
\[
y^2 = x^3-x+m^2.
\]
In \cite{BrownMyers}, they proved that each curve in this family has rank at least $2$, and that when $m=54n^2 - 165n-90$, the curve has rank at least $3$. Our theorem therefore produces an even simpler-to-describe subfamily of curves of rank at least $3$. We prove this theorem in Section \ref{sec:family1}.

Our second result studies a similar-looking family of elliptic curves and obtains a result in the spirit of \cite[Theorem 1]{BrownMyers}.

\begin{theorem}\label{thm:main-intro-rk2}
Let $m>1$ be an integer such that $3 \nmid m$ and $2 \mid \mid m$. Then the elliptic curve
\[
E'_m \colon y^2 = x^3 + x - m^6
\]
has rank at least $2$.
\end{theorem}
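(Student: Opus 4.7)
The plan is to exhibit two explicit integral points on $E'_m$ and establish their $\ZZ$-linear independence modulo torsion.

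The two points are $P_1 := (m^2, m)$ and $P_2 := (m^6, m^9)$, verified by direct substitution ($m^6 + m^2 - m^6 = m^2$ and $m^{18} + m^6 - m^6 = m^{18}$). I would next argue that neither is torsion, via the Lutz-Nagell theorem applied with $\Delta = -16(4 + 27 m^{12})$: any rational torsion point $(x, y)$ with $y \neq 0$ has integer coordinates and $y^2 \mid 4 + 27 m^{12}$. For $P_2$, we have $y^2 = m^{18} > 4 + 27 m^{12}$ already for $m \geq 2$, so $P_2$ is of infinite order. For $P_1$, $y^2 = m^2$ together with $m^2 \mid 27 m^{12}$ reduces the divisibility condition to $m^2 \mid 4$, leaving only $m = 2$ under our hypotheses; this case is dispatched by computing $2 P_1 = (2273/16, -108369/64)$, whose non-integer $x$-coordinate again violates Lutz-Nagell, so $P_1$ too has infinite order.

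The crux is independence. Suppose a nontrivial relation $a P_1 + b P_2 = T$ with $T$ torsion and $(a, b) \neq (0, 0)$. Since $3 \nmid m$, the curve has good reduction at $3$, and a direct point count yields $|E'_m(\FF_3)| = 4$. By Fermat's little theorem $m^6 \equiv 1$ and $m^9 \equiv m \pmod{3}$, so $\bar{P}_1 \equiv (1, m \bmod 3) \equiv \bar{P}_2$, both reducing to the same point of order $4$. Thus $P_1 - P_2$ lies in the torsion-free formal group $\hat{E}'_m(3 \ZZ_3)$ and has infinite order, already ruling out the simplest relation $P_1 = P_2 + T$. To exclude general relations, I would appeal to the elliptic divisibility sequence $(B_k)$ attached to $P_1$: integrality of $x(P_2) = m^6$ forces, in any candidate relation $P_2 = k P_1 + T$, the denominator $B_k^2$ of $x(k P_1)$ to be cancelled by the torsion contribution of $T$; but $\log B_k \asymp k^2 \hat{h}(P_1) \asymp 2 k^2 \log m$ grows quadratically, constraining $|k|$ to be bounded, after which the finitely many residual cases are dispatched by direct computation.

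The principal obstacle is converting these local and height-theoretic inputs into a clean global independence statement valid uniformly in $m$. A cleaner conceptual alternative is Silverman's specialization theorem: viewing $P_1$ and $P_2$ as sections of the elliptic surface $\{E'_m\}_{m \in \mathbb{A}^1}$, their linear independence on the generic fiber over $\QQ(m)$---verifiable by specializing to e.g.\ $m = 2$ and applying a standard $2$-descent or canonical-height computation on $E'_2(\QQ)$---propagates to all but finitely many specializations, and the arithmetic hypotheses $2 \mid \mid m$ and $3 \nmid m$ then serve to exclude the remaining bad fibers.
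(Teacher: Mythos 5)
There is a genuine gap: the independence of $P_1$ and $P_2$ is never actually proved. Your elliptic-divisibility-sequence sketch only addresses relations of the special shape $P_2 = kP_1 + T$, whereas a dependence is a general relation $aP_1 + bP_2 = T$ with arbitrary $(a,b)\neq(0,0)$; moreover the claimed bound on $|k|$ from $\log B_k \asymp k^2\hat{h}(P_1)$ is asymptotic, is not made uniform in $m$, and the ``finitely many residual cases'' are never identified or dispatched. The fallback via Silverman's specialization theorem proves strictly less than the statement: it yields independence for all but finitely many $m$ with \emph{no control over which $m$ are exceptional}, so the assertion that the hypotheses $2\,\|\,m$ and $3\nmid m$ ``exclude the remaining bad fibers'' is unsupported --- the exceptional set coming from specialization has nothing a priori to do with those congruence conditions. (The paper itself relegates the specialization idea to a closing remark precisely because it only gives ``all but finitely many $m$.'') A further warning sign is that your argument never genuinely uses $2\,\|\,m$, whereas that hypothesis is essential in the paper's treatment of the point $P'+Q'$.

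For comparison, the paper's route is a clean $2$-descent-style count. First it shows the full torsion subgroup of $E'_m(\Q)$ is trivial (reduction mod $3$ gives $|E'_m(\F_3)|=4$, so any torsion would contain a $2$-torsion point $(a,0)$, which is ruled out by factoring $m^6=a(a^2+1)$ into coprime sixth powers). Trivial torsion gives $|E'_m(\Q)/2E'_m(\Q)|=2^r$, so it suffices to show that $[P']$, $[Q']$, and $[P'+Q']$ are all nonzero in $E'_m(\Q)/2E'_m(\Q)$, i.e.\ that none of these three points is twice a rational point. This is done by elementary congruences on the doubling formula: mod $3$ (using $3\nmid m$, so $m^6\equiv 1$) for $P'$ and $Q'$, and mod $16$ (using $m\equiv 2\bmod 4$) for $P'+Q'$. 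The subgroup $\langle[P'],[Q']\rangle$ then has order $4$, forcing $r\geq 2$. If you wish to salvage a height-theoretic argument along the lines you suggest, the viable version is the one the paper uses for its stronger Theorem \ref{thm:unconditional}: compute $\hat{h}(P')=\log m$, $\hat{h}(Q')=3\log m$, $\hat{h}(P'+Q')=2\log m$ exactly (taking $\log=\log_m$), form the $2\times 2$ N\'eron--Tate pairing matrix, and check its determinant is nonzero --- but that requires an exact evaluation of the canonical heights, not just the growth-rate heuristic in your sketch.
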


We note that $2 \mid \mid m$ means that $m \equiv 2 \mod 4$. The proof of this theorem occupies Section \ref{sec:family2}.

\begin{remark}
Once again, when $m=0$ or $m=1$, our methods do not apply, and one may check that the corresponding ranks are $0$ and $1$, respectively.
\end{remark}

Both of these theorems were discovered computationally by investigating ranks of elliptic curves in families using SageMath \cite{sage}, and they both yield to proofs by elementary means.

However, the computational data suggests that, in fact, Theorems  \ref{thm:main-intro-rk3} and \ref{thm:main-intro-rk2} are true for any $m>1$, and we are able to prove this as well, using less elementary techniques. Namely, we prove

\begin{theorem}\label{thm:unconditional}
Let $m > 1$ be an integer. Then
\begin{enumerate}
\item the elliptic curve $E_m$ has rank at least $3$, and
\item the elliptic curve $E'_m$ has rank at least $2$.
\end{enumerate}
\end{theorem}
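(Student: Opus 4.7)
The plan is to embed the two families into elliptic surfaces over $\Q(t)$, namely
\[
\mathcal{E} : y^2 = x^3 - x + t^6 \qquad \text{and} \qquad \mathcal{E}' : y^2 = x^3 + x - t^6,
\]
and to combine the elementary Theorems \ref{thm:main-intro-rk3} and \ref{thm:main-intro-rk2} with Silverman's specialization theorem. The explicit rational points already used in the elementary proofs extend to sections of these surfaces: on $\mathcal{E}$, one has the three sections $P_1(t)=(0,t^3)$, $P_2(t)=(1,t^3)$, $P_3(t)=(-1,t^3)$; on $\mathcal{E}'$, one has the section $Q_1(t)=(t^2,t)$ together with a second section $Q_2(t)$ extracted from the construction that appears in the proof of Theorem \ref{thm:main-intro-rk2}.

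The crucial observation is that \emph{linear independence of these sections over $\Q(t)$ is automatic once it is known at even a single specialization}: a nontrivial relation among $P_1(t), P_2(t), P_3(t)$ in $\mathcal{E}(\Q(t))$ would specialize to a nontrivial relation at every value of the parameter, contradicting Theorem \ref{thm:main-intro-rk3}, which already supplies infinitely many $m$ at which the three points are independent in $E_m(\Q)$. The same argument, applied using Theorem \ref{thm:main-intro-rk2}, yields independence of $Q_1(t)$ and $Q_2(t)$ in $\mathcal{E}'(\Q(t))$. Hence the generic ranks of $\mathcal{E}$ and $\mathcal{E}'$ over $\Q(t)$ are at least $3$ and $2$, respectively.

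Silverman's specialization theorem now implies that the specialization maps $\mathcal{E}(\Q(t))\to E_m(\Q)$ and $\mathcal{E}'(\Q(t))\to E'_m(\Q)$ are injective for all but finitely many integers $m$, which gives the rank lower bounds of Theorem \ref{thm:unconditional} away from a finite exceptional set. Each remaining value of $m$ is then dealt with case-by-case: an explicit $2$-descent (for example, in SageMath) verifies the desired rank bound directly for the finitely many $m$ not already covered.

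The main obstacle is making the exceptional set effective. Silverman's theorem in its usual statement only asserts the finiteness of the bad $m$ without exhibiting them, so to complete the argument rigorously one must either determine the bad locus explicitly—by a height-theoretic analysis of the singular fibers of $\mathcal{E}$ and $\mathcal{E}'$ via Tate's algorithm and the resulting local error terms in the specialization of canonical heights—or invoke a quantitative form of specialization that bounds the exceptional $m$ in terms of computable invariants of the surfaces, thereby reducing the problem to a finite computer check for small $m$.
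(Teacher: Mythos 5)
There are two genuine problems with your proposal, one fatal and one acknowledged but unresolved. First, your three sections on $\mathcal{E}\colon y^2=x^3-x+t^6$, namely $(0,t^3)$, $(1,t^3)$, and $(-1,t^3)$, all lie on the horizontal line $y=t^3$ (indeed $x^3-x+t^6=t^6$ forces $x\in\{0,1,-1\}$), so they are collinear and satisfy $P_1(t)+P_2(t)+P_3(t)=\mathcal{O}$ in $\mathcal{E}(\Q(t))$. They therefore generate a group of rank at most $2$ and cannot witness generic rank $3$; this is also why your claim that a nontrivial relation would ``specialize to a nontrivial relation at every value of the parameter'' cannot be reconciled with Theorem \ref{thm:main-intro-rk3} --- the paper's third point is $R=(-m^2,m)$, not $(1,m^3)$, precisely to escape this line. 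Second, even after fixing the sections, the version of Silverman's specialization theorem you invoke only asserts that the set of bad $m$ is finite; it does not exhibit it, so the concluding ``case-by-case'' verification cannot actually be performed. You name this obstacle yourself, but naming it does not discharge it, and without an effective bound the argument is not a proof.

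The paper sidesteps both issues by a direct computation of the N\'{e}ron--Tate height pairing matrix. Normalizing the height with $\log=\log_m$, the canonical heights of $P'$, $Q'$, $P'+Q'$ (and likewise of the combinations of $P,Q,R$) come out to explicit rational numbers independent of $m$, because the $x$-coordinates of $2^N\cdot(\,\cdot\,)$ are rational functions in $m$ whose degrees can be tracked through the duplication formula. The resulting Gram matrices
\[
\mathcal{H}_{P',Q'}=\left(\begin{matrix}2&-2\\-2&6\end{matrix}\right),
\qquad
\mathcal{H}_{P,Q,R}=\left(\begin{matrix}2&-1&0\\-1&2&-1\\0&-1&2\end{matrix}\right)
\]
have nonzero determinant, so independence holds uniformly for every integer $m>1$, with no exceptional set at all. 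Note that the paper's closing remark explicitly flags the specialization-theorem route you propose as a \emph{possible further strengthening} (to rational $m$) that the authors chose not to pursue --- for exactly the effectivity reason that blocks your argument.
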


This last theorem, which is clearly a strengthened version of Theorem \ref{thm:main-intro-rk2}, does not seem accessible by the same techniques that Brown and Myers employed, but we are able to prove it through a simple and explicit computation of a N\'{e}ron-Tate height pairing matrix. We do this in Section \ref{sec:height-matrix}.

\subsection{Strategy}

The strategy for proving Theorems \ref{thm:main-intro-rk3} and \ref{thm:main-intro-rk2}, which is the same as in \cite{BrownMyers}, consists of two main steps. The first step is to prove that our curves have trivial torsion subgroup, and the utility of this fact is given by the following proposition.

\begin{proposition}\label{prop:utility-of-trivial-torsion}
Let $E/\Q$ be an elliptic curve with trivial torsion subgroup. Then
\[
|E(\Q)/2E(\Q)| = 2^r
\]
where $r$ is the Mordell-Weil rank of $E(\Q)$.
\end{proposition}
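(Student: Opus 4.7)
The proof should be essentially immediate from the Mordell–Weil theorem once one unpacks the structure of the group $E(\Q)$. The plan is to apply the standard structure theorem for finitely generated abelian groups and then compute $A/2A$ in each factor.

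First, I would invoke Mordell–Weil to write $E(\Q) \simeq \Z^r \oplus E(\Q)_{\tor}$. Under the hypothesis that the torsion subgroup is trivial, this simplifies to $E(\Q) \simeq \Z^r$. Tensoring with $\Z/2\Z$ (equivalently, quotienting by $2E(\Q)$) gives $E(\Q)/2E(\Q) \simeq (\Z/2\Z)^r$, so the order of this quotient is exactly $2^r$.

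I would also remark on the more general identity: for any finitely generated abelian group $A$ of free rank $r$, one has $|A/2A| = 2^r \cdot |A[2]|$, where $A[2]$ denotes the $2$-torsion subgroup. This follows by decomposing $A$ into its free and torsion parts, noting that the free part contributes $2^r$ and each cyclic torsion factor $\Z/n\Z$ contributes a factor of $2$ to $|A/2A|$ iff it contributes a factor of $2$ to $|A[2]|$ (namely, iff $n$ is even). In our setting, trivial torsion forces $|E(\Q)[2]| = 1$, recovering the proposition.

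There is essentially no obstacle to overcome here; the statement is a direct corollary of the Mordell–Weil theorem. The only subtle point worth noting for the reader is that, without the triviality of torsion, one would need to account for the $2$-torsion contribution, which is exactly how the $2$-descent procedure recovers $r$ in practice by computing $|E(\Q)/2E(\Q)|$ and dividing by $|E(\Q)[2]|$.
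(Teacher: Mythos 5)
Your proof is correct and follows essentially the same route as the paper: apply Mordell--Weil, use trivial torsion to reduce to $E(\Q) \simeq \Z^r$, and then observe $E(\Q)/2E(\Q) \simeq (\Z/2\Z)^r$. The additional remark on the general formula $|A/2A| = 2^r \cdot |A[2]|$ is accurate and a nice aside, but not needed here.
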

\begin{proof}
This is \cite[Theorem 4]{BrownMyers}. The main point is that the Mordell-Weil isomorphism
\begin{equation*}
E(\Q) \simeq \Z^r \oplus \Z_{\mathrm{tor}}
\end{equation*}
reduces, under the hypothesis of the proposition, to the isomorphism
\begin{equation*}
E(\Q) \simeq \Z^r.
\end{equation*}
This in turn induces an isomorphism of $2$-groups
\begin{equation*}
E(\Q)/2E(\Q) \simeq (\Z/2\Z)^r.
\end{equation*}
\end{proof}

Given this proposition, it will not surprise the reader to learn that the second step of our strategy is to produce some explicit points on each elliptic curve $E$ and to show that they are independent in the group $E(\Q)/2E(\Q)$. Our curves are chosen so that rational points can be found easily by inspection, and then we use arguments from elementary number theory to prove that these points are not obtained by doubling any other rational points.

Since we will use it frequently in the rest of the paper, we remind the reader that if $P=(x_0,y_0)$ is a rational point on the elliptic curve
\[
y^2=x^3+ax+b,
\]
then the $x$-coordinate of $2P=P+P$ is given by
\begin{equation}\label{eq:doubling-formula}
x(2P) = \left(\frac{3x^2_0+a}{2y_0}\right)^2 -2x_0.
\end{equation}

In order to tie together the ideas described above, we need the following result, which is well-known and appears as \cite[Lemma 8]{BrownMyers}. We include a proof for completeness and to emphasize its elementary nature. In what follows, we write $\mathcal{O}$ for the point at infinity of $E(\Q)$, which plays the role of the identity element for its group structure. Also, given a point $P \in E(\Q)$, we will write $[P]$ for its image in $E(\Q)/2E(\Q)$.

\begin{proposition}\label{prop:non-doubles-contribute-to-rank}
Let $E$ be an elliptic curve with trivial $2$-torsion, and consider a set of rational points $$\{P_1,\ldots,P_k, Q\} \subset E(\Q).$$ Suppose the points $P_i$ are independent in $E(\Q)$. If $Q$ is not in the subgroup $$\langle [P_1], \ldots, [P_{k}] \rangle \subset E(\Q)/2E(\Q)$$ of $E(\Q)/2E(\Q)$ generated by the points $P_i$, then the points $$P_1,\ldots,P_k,Q$$ are independent in $E(\Q)$. In particular, the rank of $E(\Q)$ is at least $k+1$.
\begin{proof}
Suppose on the contrary we have
\begin{equation}\label{eq:linear-dependence}
\mathcal{O} =n_0 Q +n_1 P_1 + \cdots + n_{k}P_{k}
\end{equation}
where not all of the $n_i$ are zero. We may assume without loss of generality that $n_0$ is positive and minimal among all such relations.

If $n_0$ is odd, then \eqref{eq:linear-dependence} induces a relation
\[
[Q] = [n_1 P_1 + \cdots +n_{k}P_{k}]
\]
in $E(\Q)/2E(\Q)$, contradicting the assumption that $Q$ is not in the subgroup generated by the $P_i$.

If instead $n_0$ is even, then \eqref{eq:linear-dependence} induces the relation
\begin{equation}\label{eq:dependent-mod-2}
[\mathcal{O}] = [n_1 P_1 + \cdots+ n_{k}P_{k}]
\end{equation}
in $E(\Q)/2E(\Q)$. Since the $P_i$ are independent in $E(\Q)$, \eqref{eq:dependent-mod-2} implies that each of the $n_i$ is even. Thus for each $0 \leq i \leq k$ we may write $n_i=2m_i$ for some integer $m_i$, and upon substituting and factoring, equation \eqref{eq:linear-dependence} becomes
\begin{equation}\label{eq:dependence-torsion}
2(m_0 Q + m_1 P_q + \cdots +m_k P_k) = \mathcal{O}.
\end{equation}
That is, the point
\[
m_0 Q + m_1 P_q + \cdots +m_k P_k
\]
is $2$-torsion. But $E$ has only trivial torsion, so \eqref{eq:dependence-torsion} implies that
\[
\mathcal{O}=m_0 Q + m_1 P_q + \cdots +m_k P_k
\]
in $E(\Q)$. Since $m_0 < n_0$, this contradicts the minimality of $n_0$, which concludes the proof of independence. The last statement of the theorem now follows from Proposition \ref{prop:utility-of-trivial-torsion}.
\end{proof}

\end{proposition}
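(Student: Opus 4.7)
The plan is to argue by contradiction, running a descent on the size of the coefficient of $Q$ in any hypothetical relation. Suppose $P_1, \ldots, P_k, Q$ satisfy a nontrivial dependence
\[
n_0 Q + n_1 P_1 + \cdots + n_k P_k = \mathcal{O}
\]
in $E(\Q)$ with not all $n_i$ zero. The independence of the $P_i$ in $E(\Q)$ rules out $n_0 = 0$, so after possibly replacing $Q$ by $-Q$ we may assume $n_0 > 0$. Among all such nontrivial relations, I would select one with $n_0$ as small as possible, so that the contradiction can be derived from this minimality.

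Next I would reduce the minimal relation modulo $2E(\Q)$ and split into cases according to the parity of $n_0$. If $n_0$ is odd, the image of the relation in $E(\Q)/2E(\Q)$ rearranges to $[Q] = \sum_{i=1}^{k} (n_i \bmod 2)\,[P_i]$, placing $[Q]$ inside $\langle [P_1], \ldots, [P_k] \rangle$ and immediately contradicting the hypothesis on $Q$. If instead $n_0$ is even, say $n_0 = 2 m_0$, the aim is to force every $n_i$ to be even as well. Once we can write $n_i = 2 m_i$ for all $i$, the dependence becomes $2\,(m_0 Q + \sum m_i P_i) = \mathcal{O}$; triviality of the $2$-torsion of $E$ (guaranteed by the trivial torsion hypothesis) allows the factor of $2$ to be stripped off, yielding the strictly smaller nontrivial relation $m_0 Q + \sum m_i P_i = \mathcal{O}$ with $0 < m_0 < n_0$, contradicting the minimality of $n_0$.

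The main obstacle is precisely this parity step in the even-$n_0$ case: reducing modulo $2E(\Q)$ gives only $\sum (n_i \bmod 2)\,[P_i] = [\mathcal{O}]$, and the delicate point is to extract from this, together with the $\Z$-independence of the $P_i$ in $E(\Q) \simeq \Z^r$ and triviality of torsion, the full statement that each individual $n_i$ is even. This is where I would expect to have to work hardest; one natural route is to assume at the outset that $\gcd(n_0, n_1, \ldots, n_k) = 1$ and use the odd-case reduction together with descent to pin down the parities. Once independence of $P_1, \ldots, P_k, Q$ in $E(\Q)$ is established, the rank lower bound is immediate from Proposition \ref{prop:utility-of-trivial-torsion}: the identity $|E(\Q)/2E(\Q)| = 2^r$ combined with the existence of $k+1$ independent classes forces $r \geq k+1$.
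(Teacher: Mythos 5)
Your proposal follows the same route as the paper's proof: take a nontrivial relation $n_0 Q + n_1 P_1 + \cdots + n_k P_k = \mathcal{O}$ with $n_0 > 0$ minimal, dispose of the case $n_0$ odd by reducing modulo $2E(\Q)$, and in the even case divide the relation by $2$ using triviality of the ($2$-)torsion to contradict minimality. But the point where you stop --- extracting from $n_1[P_1] + \cdots + n_k[P_k] = [\mathcal{O}]$ the conclusion that every $n_i$ is even --- is precisely the crux of the even case, and leaving it as ``where I would expect to have to work hardest'' is a genuine gap: without that step the descent never produces the smaller relation.

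Moreover, your instinct that this step is delicate is well founded, and the gcd-normalization you suggest will not close it. From $n_1 P_1 + \cdots + n_k P_k \in 2E(\Q)$ together with independence of the $P_i$ in $E(\Q)$ one cannot conclude that each $n_i$ is even: if $G$ generates a free summand of $E(\Q)$ and $P_1 = 2G$, then $P_1$ is a point of infinite order (hence ``independent'') lying in $2E(\Q)$ with coefficient $n_1 = 1$. (With $Q = G$ this configuration even violates the proposition as literally stated, since $[Q] \notin \langle [P_1] \rangle = \{[\mathcal{O}]\}$ yet $2Q - P_1 = \mathcal{O}$.) What is actually needed is that the classes $[P_1], \ldots, [P_k]$ be independent in $E(\Q)/2E(\Q)$, i.e.\ that they generate a subgroup of order $2^k$ there; then $\sum n_i [P_i] = [\mathcal{O}]$ does force all $n_i$ to be even, and the rest of the descent goes through verbatim. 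This stronger hypothesis is exactly what the paper verifies before each application of the proposition (e.g.\ Proposition \ref{prop:family1-prop1}), and the paper's own justification of the parity step --- attributing it to independence in $E(\Q)$ rather than in $E(\Q)/2E(\Q)$ --- is the same overly optimistic reading that you were right to hesitate over.
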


\subsection*{Acknowledgements}
Much of the work in this paper was carried out by the second-named author while he was an undergraduate. The authors are grateful to Union College for its support of undergraduate research. The authors also thank the anonymous referee for their careful reading of an earlier version of this paper, as well as their helpful corrections and suggestions.

\section{Torsion subgroups}\label{sec:torsion}

In this section we will prove that each elliptic curve in the families
\[
E_m \colon y^2= x^3 - x + m^6
\]
and
\[
E'_m \colon y^2=x^3 + x - m^6
\]
has trivial torsion subgroup. In fact, for $E_m$ this was already established by Brown and Myers, as we now record. Their proof follows the same essential strategy as the proof of our Theorem \ref{thm:torsion-family-two} below.

\begin{theorem}\label{thm:torsion-family-one}
For every $m>1$, the elliptic curve
\[
E_m \colon y^2 = x^3 - x + m^6
\]
has trivial torsion subgroup.
\end{theorem}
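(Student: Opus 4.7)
The plan is to establish the theorem in two parts: first rule out rational $2$-torsion, then rule out rational odd torsion, which together force the full torsion subgroup to vanish.

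For the $2$-torsion, I would observe that a nontrivial rational $2$-torsion point on $E_m$ corresponds to a rational root of $f(x) := x^3 - x + m^6$. Since $f$ is monic with integer coefficients, any such root must be an integer. The discriminant of the cubic $f$ is $4 - 27m^{12}$, which is strictly negative for $m \geq 1$, so $f$ has a unique real root $\xi$. Direct evaluation gives $f(-m^2) = m^2 > 0$ and $f(-m^2 - 1) = -3m^4 - 2m^2 < 0$, localizing $\xi$ in the open interval $(-m^2 - 1, -m^2)$, which contains no integer. Hence $E_m[2](\Q) = 0$. Moreover, if $E_m(\Q)$ contained a point $P$ of order $2^k$ for some $k \geq 1$, then $2^{k-1}P$ would be a nontrivial rational $2$-torsion point; so $E_m(\Q)$ has no nontrivial $2$-power torsion at all.

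For the odd torsion, I would reduce modulo primes of good reduction, using the standard fact that for a prime $\ell$ coprime to the residue characteristic $p$, the $\ell$-primary part of $E_m(\Q)_{\tor}$ injects into $E_m(\F_p)$. A short congruence check on $-16(-4 + 27m^{12})$ shows that $E_m$ has good reduction at both $5$ and $7$ for every integer $m$. Since $m^{12} \bmod 5$ and $m^6 \bmod 7$ each take only the values $0$ and $1$, the point count collapses to a handful of cases, yielding $|E_m(\F_5)| = 8$ and $|E_m(\F_7)| \in \{8, 12\}$ uniformly in $m$. Reduction mod $5$ then kills every odd torsion prime $\ell \neq 5$ (since $8$ is a $2$-power), and reduction mod $7$ kills $\ell = 5$ (since neither $8$ nor $12$ is divisible by $5$). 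Combined with the previous paragraph, this forces $E_m(\Q)_{\tor} = 0$.

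The main obstacle is really organizational: one must pick enough auxiliary primes with good reduction uniformly in $m$ so that the resulting point-count bounds jointly eliminate every torsion prime allowed by Mazur's theorem (namely $2$, $3$, $5$, and $7$). Each individual step --- the rational-root localization for $2$-torsion, the congruence check on the discriminant, and the mod-$p$ enumerations --- is an elementary finite computation.
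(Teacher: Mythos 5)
Your proof is correct, but it takes a genuinely different route from the paper, which disposes of this theorem in one line: since $E_m = \mathcal{E}_{m^3}$ for the Brown--Myers family $\mathcal{E}_n \colon y^2 = x^3 - x + n^2$, the statement is exactly \cite[Theorem~3]{BrownMyers} applied to the parameter $m^3$ (their underlying method is reduction modulo $3$ together with a Nagell--Lutz argument, the same template the paper carries out in detail for $E'_m$). Your argument is instead self-contained: the interval localization of the unique real root of $x^3 - x + m^6$ in $(-m^2-1, -m^2)$ is a clean and fully elementary way to kill $2$-power torsion, and the reductions modulo $5$ and $7$ correctly eliminate all odd torsion (indeed $|E_m(\F_5)| = 8$ already rules out every odd prime except $5$, so the appeal to Mazur's theorem is unnecessary). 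What the paper's approach buys is brevity via citation; what yours buys is a proof readable without consulting \cite{BrownMyers}. One small correction: the case analysis for the count over $\F_5$ is governed by $m^6 \bmod 5$, not $m^{12} \bmod 5$, and by Fermat $m^6 \equiv m^2 \pmod 5$ takes the three values $0$, $1$, $4$ rather than two; all three cases do yield $|E_m(\F_5)| = 8$ (the curves for $1$ and $4$ are isomorphic over $\F_5$ via $x \mapsto -x$, $y \mapsto 2y$, since $-1$ is a square mod $5$), so the conclusion stands, but the enumeration as written skips a case.
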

\begin{proof}
In \cite[Theorem 3]{BrownMyers}, Brown and Myers prove that for $m>1$, each elliptic curve in the family
\begin{equation}\label{eq:brown-myers-family}
\mathcal{E}_m \colon y^2=x^3 + x - m^2
\end{equation}
has trivial torsion subgroup. Since we have $E_m=\mathcal{E}_{m^3}$, their result immediately applies to our (sub)family of curves $E_m$.
\end{proof}

Recall that a rational prime $p$ is called a \textit{prime of good reduction} for an elliptic curve $E/\Q$ if it does not divide the discriminant $\mathrm{disc}(E)$ of $E$. Brown and Myers prove their result on the torsion subgroup by using the well-known fact that, for each prime $p$ of good reduction for $E$, there is an injection
\[
E(\Q)_{\mathrm{tors}} \hookrightarrow E(\F_p)
\]
of the torsion subgroup into the group of $\F_p$-points on $E$, where $\F_p$ denotes the finite field with $p$ elements. This is a very common strategy, and we will now use it to prove the analogous result for the family $E'_m$.

\begin{theorem}\label{thm:torsion-family-two}
For every nonzero integer $m$, the elliptic curve
\[
E'_m \colon y^2 = x^3 + x - m^6
\]
has trivial torsion subgroup.
\end{theorem}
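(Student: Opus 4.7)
The plan is to carry out exactly the template sketched in the paragraph before the statement: choose a prime $p$ of good reduction for $E'_m$, use the injection $E'_m(\Q)_{\tors} \hookrightarrow E'_m(\F_p)$ to force the torsion order to divide $|E'_m(\F_p)|$, and then independently rule out whatever residual torsion that bound still permits. The cleanest choice is $p = 3$, which is a prime of good reduction for every $m$ by the computation
\[
\mathrm{disc}(E'_m) \;=\; -16(4 + 27m^{12}) \;\equiv\; 2 \pmod 3.
\]

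Next, I compute $|E'_m(\F_3)|$. Since $m^6 \bmod 3$ depends only on whether $3 \mid m$, this splits into two short enumerations over $x \in \{0,1,2\}$, one for each of the reduced equations $y^2 = x^3 + x - 1$ and $y^2 = x^3 + x$; I expect both counts to come out to exactly $4$ points (including $\mathcal{O}$). Combined with the injection, this shows that $E'_m(\Q)_{\tors}$ has order $1$, $2$, or $4$, so in particular any nontrivial torsion subgroup contains a point of order $2$.

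It therefore suffices to prove that $E'_m$ has no rational point of order $2$, equivalently, that $x^3 + x - m^6$ has no rational root. Since the polynomial is monic with integer coefficients, the rational root theorem reduces this to the integer case: an integer root would satisfy $x(x^2+1) = m^6$. Using $\gcd(x, x^2+1) = 1$ together with $m^6 > 0$ (so $x > 0$), both coprime factors must be positive sixth powers, $x = a^6$ and $x^2 + 1 = b^6$ with $a, b \geq 1$. The problem collapses to $b^6 = a^{12} + 1$, which for $a \geq 1$ is impossible because $(a^2)^6 = a^{12}$ and $(a^2 + 1)^6$ are consecutive positive sixth powers strictly sandwiching $a^{12}+1$.

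The only step with any real content is the last one: one must observe that coprimality really does force each factor of $m^6$ to be a full sixth power (the positivity coming from $m \neq 0$), and that the elementary sandwich inequality $(a^2)^6 < a^{12} + 1 < (a^2+1)^6$ holds for every $a \geq 1$. The reduction-theoretic injection and the $\F_3$ point-count in both residue classes of $m$ modulo $3$ are entirely routine, and I expect no obstacle there beyond bookkeeping.
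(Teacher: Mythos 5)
Your proposal is correct and follows essentially the same route as the paper: reduction modulo $3$ to bound the torsion order by $4$, then ruling out $2$-torsion by writing $m^6 = x(x^2+1)$ with coprime factors that must each be sixth powers. The only cosmetic differences are that you invoke the rational root theorem where the paper uses Nagell--Lutz, and you finish by sandwiching $a^{12}+1$ between consecutive sixth powers where the paper factors $1=(c^3-a)(c^3+a)$; both variants are equally valid.
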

\begin{proof}
The discriminant of $E'_m$ is given by
\[
\mathrm{disc}(E'_m)=-16(4+27m^{12}).
\]
We see immediately that $\mathrm{disc}(E'_m) \equiv 2 \mod 3$, so $3$ is a prime of good reduction for $E'_m$. If $3 \nmid m$, then by Fermat's little theorem we have $m^6 \equiv 1 \mod 3$, and over $\F_3$ the defining equation becomes
\[
y^2 = x^3 + x - 1,
\]
from which it is easy to verify that
\[
E'_m(\F_3)=\{ \mathcal{O}, (1,1), (1,2), (2,0)\}.
\]
If $3 \mid m$, then
over $\F_3$ the defining equation becomes
\[
y^2 = x^3 + x,
\]
and we have
\[
E'_m(\F_3)=\{ \mathcal{O}, (0,1), (2,1), (2,2)\}.
\]
So in any case, we see that$|E'_m(\F_3)|=4$.
Since $E'_m(\Q)_\mathrm{tors}$ injects into $E(\F_3)$, it follows that if $E'_m(\Q)_\mathrm{tors}$ is nontrivial, then it must contain a point of order $2$.

If $E'_m$ has a point of order $2$, then by the Nagell-Lutz theorem, this point must be of the form $P=(a,0)$ with $a$ an integer, so we have
\[
0 = a^3 +a-m^6.
\]
This is clearly impossible when $m=\pm1$, so instead assume $|m|>1$, in which case it follows that $a > 1$. Rewriting the above equation, we have
\[
m^6 = a(a^2+1).
\]
The factors on the right-hand side are relatively prime, so by the Fundamental Theorem of Arithmetic, each is a $6$th power. In particular, there must exist an integer $c>1$ such that
\[
a^2+1 = c^6,
\]
or equivalently
\[
1=(c^3-a)(c^3+a).
\]
Since both factors on the right are integers and $c^3+a>1$, this is impossible. So $E'_m(\Q)$ contains no points of order $2$, and we conclude that $E'_m(\Q)_\mathrm{tors}$ is trivial.
\end{proof}

Thus, in light of Proposition \ref{prop:non-doubles-contribute-to-rank}, our task is now to produce many independent points in $E_m(\Q)/2E_m(\Q)$ and $E'_m(\Q)/2E'_m(\Q)$. We do this in the next two sections, respectively.

\section{The family $E_m$ has rank $\geq 3$}\label{sec:family1}

In this section, let us consider the family of elliptic curves
\[
E_m \colon y^2 = x^3-x+m^6, \quad \text{where $m$ is odd and $3 \mid m$}.
\]
We first observe that $E_m(\Q)$ contains the following three points:
\[
P=(0,m^3), \quad  Q=(-1,m^3), \quad \text{and} \quad  R=(-m^2,m),
\]
Our strategy is to show that $P$, $Q$, and $R$
are independent in $E(\Q)$ by showing that they generate a subgroup of order $8$ in $E_m(\Q)/2E_m(\Q)$. This will be accomplished by showing that they satisfy the conditions of Proposition \ref{prop:non-doubles-contribute-to-rank}, from which we may conclude that the rank of $E_m$ is at least 3.

As we observed earlier, our family $E_m$ is a subfamily of the Brown-Myers family $\mathcal{E}_m$ as described in \eqref{eq:brown-myers-family}. We thus get the following result for free.

\begin{proposition}\label{prop:family1-prop1}
The points $P$ and $Q$ generate a subgroup of order $4$ in $E_m(\Q)/2E_m(\Q)$.
\end{proposition}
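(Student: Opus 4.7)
The plan is to inherit this proposition directly from the corresponding result in \cite{BrownMyers}. Under the identification $E_m = \mathcal{E}_{m^3}$ already exploited in the proof of Theorem \ref{thm:torsion-family-one}, the points $P = (0, m^3)$ and $Q = (-1, m^3)$ coincide with the two ``obvious'' points on $\mathcal{E}_n$ obtained by plugging $x = 0$ and $x = -1$ into its defining equation (with $n = m^3$); in each case $y^2 = n^2$. So any independence statement for those two points on $\mathcal{E}_n$ transfers verbatim to $E_m$, provided only that the hypothesis $n > 1$ is met --- which it is, since $m > 1$ forces $m^3 > 1$.

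The specific input I would cite is \cite[Theorem 1]{BrownMyers}, where Brown and Myers establish that $\mathcal{E}_n$ has rank at least $2$. Because their argument uses exactly the bootstrapping strategy formalized here as Proposition \ref{prop:non-doubles-contribute-to-rank} --- combining triviality of the torsion with independence in the mod-$2$ group, via the doubling formula \eqref{eq:doubling-formula} --- they must in fact show that $[(0, n)]$ and $[(-1, n)]$ generate a subgroup of order $4$ in $\mathcal{E}_n(\Q)/2\mathcal{E}_n(\Q)$; equivalently, they check that none of $(0, n)$, $(-1, n)$, or $(0, n) + (-1, n)$ is a double of a rational point. Transcribing this under the substitution $n = m^3$ gives the desired statement for $[P]$ and $[Q]$.

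The ``hard part'' here is really just bookkeeping: locating the precise statement in \cite{BrownMyers} that identifies $(0, n)$ and $(-1, n)$ as representing independent classes in the mod-$2$ group, rather than merely independent classes in $\mathcal{E}_n(\Q)$ itself (which would not suffice, since independence in $E(\Q)$ does not imply independence in $E(\Q)/2E(\Q)$ in general). Since the Brown--Myers proof is the mod-$2$ one by design, this identification is immediate and no new computation is required.
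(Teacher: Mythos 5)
Your proposal matches the paper's proof exactly: the paper also inherits the statement from Brown--Myers via the identification $E_m = \mathcal{E}_{m^3}$, citing \cite[Lemma 6]{BrownMyers}, which states precisely the mod-$2$ order-$4$ claim you correctly identify as the needed input (rather than mere independence in $\mathcal{E}_n(\Q)$). The only difference is bookkeeping --- the paper points to Lemma 6 directly instead of extracting it from the proof of their Theorem 1 --- so no further work is needed.
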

\begin{proof}
This is \cite[Lemma 6]{BrownMyers} applied to their curve $\mathcal{E}_{m^3}$.
\end{proof}

It remains to study the points
\begin{align*}
R&=(-m^2,m)\\
P+R&=\left(\frac{2m^4-2m^2+1}{m^2},\frac{-3m^6+4m^4-3m^2+1}{m^3}\right)\\
Q+R&=(2m^2+1,-3m^3-2m)\\
P+Q+R&=(2m^2-1,3m^3-2m)
\end{align*}

Since we have assumed $m$ is an odd integer, the $x$-coordinates of $R$, $Q+R$, and $P+Q+R$ are all odd integers. We have the following lemma.

\begin{lemma}\label{lem:family-one-odd-k}
Let $A = (k,l)$ and $B = (x_0, y_0)$ belong to $E_m(\Q)$. If $k$ is an odd integer, then $A \neq 2B$.
\end{lemma}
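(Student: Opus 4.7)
The plan is to argue by contradiction: suppose $A = 2B$ for some $B = (x_0, y_0) \in E_m(\Q)$ (necessarily with $y_0 \ne 0$, since $E_m$ has trivial torsion by Theorem \ref{thm:torsion-family-one}). Write $x_0 = p/q^2$ and $y_0 = r/q^3$ in lowest terms, with $q > 0$ and $\gcd(p,q) = \gcd(r,q) = 1$, so that the curve equation yields $r^2 = p^3 - p q^4 + m^6 q^6$. Substituting into the doubling formula \eqref{eq:doubling-formula} (with $a = -1$) and clearing denominators produces the key identity
\[
4 r^2 (k q^2 + 2p) = (3 p^2 - q^4)^2. \qquad (\ast)
\]

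First I would use a $2$-adic parity analysis of $(\ast)$ to reduce to the case where $p$, $q$, $r$ are all odd. If $q$ is even, then $p$ and $r$ are forced odd, and the two sides of $(\ast)$ have $v_2 = 3$ and $v_2 = 0$ respectively; if $q$ is odd and $p$ is even, then $r$ is odd, and the two sides have $v_2 = 2$ and $v_2 = 0$. Both cases are impossible. In the remaining case $p$, $q$, $r$ are all odd, both sides of $(\ast)$ have $v_2 = 2$, and dividing by $4$ gives
\[
r^2 (k q^2 + 2p) = \left( \tfrac{3p^2 - q^4}{2} \right)^2.
\]
Since the right side is a perfect square, $r^2$ is a perfect square dividing it, and $k q^2 + 2p$ is a positive integer, unique factorization forces $k q^2 + 2p$ to be a perfect square: write $k q^2 + 2p = u^2$, so that $3p^2 - q^4 = \pm 2 r u$.

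The main obstacle is to rule out this remaining case, and this is precisely where the hypothesis $3 \mid m$ enters; I would do this via $3$-adic analysis, split into two subcases. If $3 \nmid q$, then reducing the curve equation modulo $3$ (using $3 \mid m$ together with Fermat's little theorem) gives $r^2 \equiv p(1 - q^2) \equiv 0 \pmod{3}$, so $3 \mid r$; hence $3 \mid \pm 2 r u = 3p^2 - q^4 \equiv -q^4 \pmod{3}$, contradicting $3 \nmid q$. If instead $3 \mid q$, then $3 \nmid p$ by coprimality, and a direct computation gives $v_3(3p^2 - q^4) = 1$, while $v_3(r) = 0$ follows from the curve equation; this forces $v_3(u) = 1$. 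But then $v_3(u^2) = 2$, while $v_3(kq^2 + 2p) = v_3(2p) = 0$, contradicting $u^2 = k q^2 + 2p$. These contradictions complete the proof.
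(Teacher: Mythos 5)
Your proof is correct, but it takes a genuinely different route from the paper's. The paper eliminates $y_0$ entirely at the outset: substituting $y_0^2 = x_0^3 - x_0 + m^6$ into the doubling formula and writing $x_0 = u/r^2$ yields a single Diophantine identity in $u$, $r$, $k$, $m$; a one-line argument modulo any prime dividing $r$ forces $r = \pm 1$ (so $x_0$ is an integer), and then reducing the simplified identity modulo $3$ gives $0 \equiv 1$ immediately, since $ku(u-1)(u+1) \equiv 0$ while $(u^2+1)^2 \equiv 1 \pmod 3$. Notably, the paper's argument uses only that $k$ is an integer and $3 \mid m$ — neither the oddness of $k$ nor of $m$ is needed. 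You instead keep the numerator $r$ of $y_0$ as a live variable, which obliges you to do a three-case $2$-adic valuation analysis of $(\ast)$ (this is where you genuinely invoke the hypotheses that $k$ and $m$ are odd), a unique-factorization step to extract the square root $u$ of $kq^2+2p$, and then a two-case $3$-adic analysis. All of these steps check out — in particular your valuation counts and the deduction $r^2 \mid N^2 \Rightarrow r \mid N$ are correct, and the two $3$-adic subcases do each yield contradictions — so the argument is sound; it is simply longer and uses strictly stronger hypotheses than necessary. The one thing worth tightening is the phrase ``$kq^2+2p$ is a positive integer'': a priori it is only nonnegative, though this costs nothing since $0$ is a perfect square and $3p^2 = q^4$ is impossible by a $3$-adic (or irrationality) count, as your subcase B implicitly shows.
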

\begin{proof}
Without loss of generality, we may write $x_0=\frac{u}{r^2}$ where $u,r, \in \Z$ with $\gcd(r,u)=1$. Suppose $A = 2B$,  so that $k = x(2B)$. From the doubling formula for points on elliptic curves as recalled in equation \eqref{eq:doubling-formula}, we obtain the equality
\begin{equation}\label{eq:k-equation}
4kr^2(u^3 - ur^4 + m^6r^6) = u^4 + 2u^2r^4 + r^8 - 8um^6r^6.
\end{equation}
Suppose $r \neq \pm 1$. Then there exists some prime $p$ which divides $r$. Considering \eqref{eq:k-equation} modulo $p$ then gives the congruence
\[
0 \equiv u^4 \mod p
\]
which implies $p|u$, but this contradicts our assumption that $\gcd(u,r) = 1$, so we conclude that $r = \pm 1$. Equation \eqref{eq:k-equation} then simplifies to
\begin{equation}\label{eq:k-equation-with-r-equal-to-1}
4k(u^3-u+m^6) = (u^2+1)^2-8um^6.
\end{equation}
Now recall that we have assumed $3$ divides $m$, so when we consider \eqref{eq:k-equation-with-r-equal-to-1} modulo 3, we obtain
\begin{equation}\label{eq:mod3-first-step}
ku(u- 1)(u+1) \equiv (u^2+1)^2 \mod 3.
\end{equation}
The $u$-factors reveal that the left-hand side of \eqref{eq:mod3-first-step} is identically $0$ modulo $3$. On the other hand, the right-hand side is always $1$ modulo $3$. This is impossible, and thus we have shown $A \neq 2B$.
\end{proof}

The following proposition is then an immediate corollary.

\begin{proposition}\label{prop:family1prop2}
The points $R$, $Q+R$, and $P+Q+R$ are nontrivial in $E_m(\Q)/2E_m(\Q)$.
\end{proposition}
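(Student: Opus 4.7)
The plan is to reduce immediately to Lemma \ref{lem:family-one-odd-k}. A point $A \in E_m(\Q)$ is trivial in $E_m(\Q)/2E_m(\Q)$ precisely when $A = 2B$ for some $B \in E_m(\Q)$, so to prove nontriviality of $R$, $Q+R$, and $P+Q+R$ it suffices to show that none of them can be written in the form $2B$.

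First I would inspect the $x$-coordinates listed just before the lemma. Since $m$ is assumed to be odd, $m^2$ is odd, so $-m^2$, $2m^2+1$, and $2m^2-1$ are each odd integers. In particular the $x$-coordinates of $R$, $Q+R$, and $P+Q+R$ are all odd integers.

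Each of these three points therefore satisfies the hypothesis of Lemma \ref{lem:family-one-odd-k} (play the role of $A = (k,l)$ with $k$ an odd integer). Applying the lemma three times, once to each point, we conclude that for every $B \in E_m(\Q)$ we have $R \neq 2B$, $Q+R \neq 2B$, and $P+Q+R \neq 2B$. Hence each of these points has nontrivial image in $E_m(\Q)/2E_m(\Q)$, which is precisely what the proposition asserts.

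There is essentially no obstacle here: the real work was already done in Lemma \ref{lem:family-one-odd-k}, and the only step in the proposition is a one-line check that the three prescribed $x$-coordinates are odd integers when $m$ is odd. The proof should be no more than a couple of sentences invoking the lemma.
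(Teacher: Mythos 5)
Your proposal is correct and matches the paper exactly: the paper also observes that the $x$-coordinates $-m^2$, $2m^2+1$, and $2m^2-1$ are odd integers when $m$ is odd and then states the proposition as an immediate corollary of Lemma \ref{lem:family-one-odd-k}. Nothing is missing.
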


We can obtain a similar result for the rational point $P+R$ using the work of Brown and Myers.

\begin{proposition}\label{prop:family1prop3}
The point $P+R$ is nontrivial in $E_m(\Q)/2E_m(\Q)$.
\end{proposition}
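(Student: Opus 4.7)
The plan is to adapt the approach that Brown and Myers use for their Lemma 6 (the source of Proposition \ref{prop:family1-prop1}) to the specific point $P+R$. Suppose toward contradiction that $P+R = 2B$ for some $B = (u/r^2, v/r^3) \in E_m(\Q)$ written in lowest terms, with $\gcd(u,r)=1$ and $v^2 = u^3 - ur^4 + m^6 r^6$. Applying the doubling formula \eqref{eq:doubling-formula} to $B$ and equating with $x(P+R) = (2m^4 - 2m^2 + 1)/m^2$ produces, after clearing denominators, the Diophantine identity
\[
m^2\bigl[(u^2+r^4)^2 - 8um^6 r^6\bigr] = 4(2m^4 - 2m^2 + 1)\,r^2(u^3 - ur^4 + m^6 r^6).
\]

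From here, the goal is to extract divisibility constraints and then derive a modular contradiction. The crucial first observation is that $\gcd(2m^4 - 2m^2 + 1, m) = 1$, since $2m^4 - 2m^2 + 1 \equiv 1 \pmod p$ for every prime $p \mid m$. Combined with the fact that $m$ is odd, the identity forces $m^2 \mid r^2 v^2$, so $m \mid rv$. Because $\gcd(u,r) = 1$ implies $\gcd(r,v) = 1$, this splits into two cases: either $m \mid r$ or $m \mid v$. In each case, one substitutes (e.g.\ $r = mr'$ in the first case, $v = mv'$ in the second), divides out the common factor of $m^2$, and then reduces the resulting relation modulo a small power of $3$. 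Thanks to the hypothesis $3 \mid m$, all of the remaining terms involving $m$ vanish modulo $3$, leaving a congruence of the shape $u^4 \equiv (\text{known expression}) \pmod 3$ whose obstruction can be read off directly.

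The principal obstacle, compared to the cleaner Lemma \ref{lem:family-one-odd-k}, is that the $x$-coordinate of $P+R$ is not an integer, so the preliminary step used there, namely reducing to $r = \pm 1$ by considering a prime divisor of $r$, no longer succeeds: here $r$ can be nontrivial (even divisible by $m$). Consequently, one must track $3$-adic valuations of $u, r$, and $v$ much more carefully and rule out both the ``$m \mid r$'' and ``$m \mid v$'' cases separately. Nonetheless, the arithmetic input is the same as in Lemma \ref{lem:family-one-odd-k} and in the Brown--Myers argument: the congruence $2m^4 - 2m^2 + 1 \equiv 1 \pmod 3$ together with $3 \mid m$ is exactly what drives the contradiction, so elementary $3$-adic bookkeeping should close the argument in the spirit of Brown and Myers.
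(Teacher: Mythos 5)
Your setup (the doubling identity, the observation that $\gcd(2m^4-2m^2+1,m)=1$, and the plan of tracking where the factor $m^2$ goes) is fine, but the engine you propose --- a contradiction modulo a small power of $3$ --- does not exist, so the argument cannot be closed along these lines. In your identity the bracketed factor $(u^2+r^4)^2-8um^6r^6$ is congruent to $(u^2+r^4)^2$ mod $3$, and $u^2+r^4$ is never divisible by $3$ (it is $\equiv u^2+1$ or $\equiv u^2$ according as $3\nmid r$ or $3\mid r$, and $3$ cannot divide both $u$ and $r$). Hence the $3$-adic valuation of the left-hand side is exactly $2v_3(m)$, while on the right it is exactly $2v_3(r)+2v_3(v)$, where $v^2=u^3-ur^4+m^6r^6$; after cancelling $3^{2v_3(m)}$ and reducing mod $3$ you are left with a congruence between two nonzero squares, i.e.\ $1\equiv 1\pmod 3$ --- no obstruction. (Mod $9$ both sides are again nonzero squares taking values in $\{1,4,7\}$, so nothing is gained.) This failure is structural, not a matter of bookkeeping: $E_m$ has good reduction at $3$ with $E_m(\F_3)=\{\mathcal{O},(0,0),(1,0),(2,0)\}\cong(\Z/2\Z)^2$, and $P=(0,m^3)$ and $R=(-m^2,m)$ both reduce to $(0,0)$, so $P+R$ reduces to $\mathcal{O}$ and lies in the kernel of reduction, which is isomorphic to $\Z_3$ and hence $2$-divisible. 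Thus $P+R$ genuinely \emph{is} twice a point of $E_m(\Q_3)$, and no congruence modulo powers of $3$ can show it is not twice a rational point. This is the opposite of the situation in Lemma \ref{lem:family-one-odd-k}, where integrality of the $x$-coordinate forces $r=\pm1$ and the left side acquires the factor $u^3-u\equiv 0\pmod 3$ with nothing on the other side to absorb it; here the denominator $m^2$ of $x(P+R)$ absorbs exactly that factor of $9$.

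The true obstruction for $P+R$ lives at the prime $2$, not $3$: the paper's proof simply notes that $x(P+R)=(2m^4-2m^2+1)/m^2$ has odd denominator and numerator $\equiv 1\pmod 4$ and invokes the $2$-adic non-doubling criterion of Brown--Myers (their Theorem 5(c)). If you want a self-contained argument, redo your computation modulo powers of $2$ rather than $3$. A smaller point: $m\mid rv$ with $\gcd(r,v)=1$ gives a coprime factorization $m=m_1m_2$ with $m_1\mid r$ and $m_2\mid v$, not the dichotomy ``$m\mid r$ or $m\mid v$''; one must argue prime by prime.
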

\begin{proof}
As stated above, the $x$-coordinate of $P+R$ is
\[
x(P+R)=\frac{2m^4-2m^2+1}{m^2}.
\]
Since we assume $m$ is odd, the denominator of $x(P+R)$ is odd, and the numerator of $x(P+R)$ is congruent to $1$ mod $4$. The result now follows from \cite[Theorem 5(c)]{BrownMyers}.
\end{proof}

We may now prove Theorem \ref{thm:torsion-family-one}.

\begin{proof}[Proof of Theorem \ref{thm:torsion-family-one}]

By Proposition \ref{prop:family1-prop1}, the points $P$ and $Q$ are independent in $\Q$ and generate a subgroup $\langle[P],[Q]\rangle$ of order $4$ in $E_m(\Q)/2E_m(\Q)$. By Propositions \ref{prop:family1prop2} and \ref{prop:family1prop3}, $R$ does not belong to $\langle [P], [Q] \rangle$, so applying Proposition \ref{prop:utility-of-trivial-torsion} shows that the rank of $E_m(\Q)$ is at least $3$.
\end{proof}

\section{The family $E'_m$ has rank $\geq 2$}\label{sec:family2}

In this next section, let us consider the family of elliptic curves
\[
E'_m \colon y^2 = x^3+x-m^6, \quad \text{where $m\equiv 2 \mod 4$ and $3 \nmid m$}.
\]
We first observe that $E'_m(\Q)$ contains the two rational points
\[
P'=(m^2,m) \quad \text{and} \quad Q'=(m^6,m^9).
\]
To prove Theorem \ref{thm:main-intro-rk2}, it suffices to show that $P'$ and $Q'$ are independent in $E'_m(\Q)$. Just as before, will do so by showing that they satisfy the conditions of Proposition \ref{prop:utility-of-trivial-torsion}. Thus, we must show that $P'$, $Q'$, and $P'+Q'$ are nontrivial in $E'_m(\Q)/2E'_m(\Q)$.

We begin by with a lemma which will allow us to easily deduce handle the cases of $P'$ and $Q'$.

\begin{lemma}\label{lem:family2prop1}
Let $A$ and $B$ be rational points in $E'_m(\Q)$. Suppose that $x(A)=a^{i}$ where
\begin{enumerate}
\item $a>1$ is an integer such that $3 \nmid a$, and
\item $i$ is an even positive integer.

\end{enumerate}
Then, $A \neq 2B$.
\end{lemma}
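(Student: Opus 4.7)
The plan is to mirror the structure of Lemma \ref{lem:family-one-odd-k}: assume for contradiction that $A = 2B$ with $B = (x_0, y_0) \in E'_m(\Q)$, write $x_0 = u/r^2$ in lowest terms with $\gcd(u,r) = 1$, and use the doubling formula \eqref{eq:doubling-formula} to produce an integer identity relating $a^i$, $u$, $r$, and $m$. Since $E'_m$ has linear coefficient $1$, after substituting $y_0^2 = x_0^3 + x_0 - m^6$ and clearing denominators I expect this identity to take the form
\[
4 a^i r^2 (u^3 + u r^4 - m^6 r^6) = (3u^2 + r^4)^2 - 8u(u^3 + u r^4 - m^6 r^6),
\]
which is the obvious analogue of equation \eqref{eq:k-equation} for the curve $E'_m$.

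The first reduction is to show that $r = \pm 1$. This should follow exactly as in the proof of Lemma \ref{lem:family-one-odd-k}: any prime divisor $p$ of $r$ forces $u^4 \equiv 0 \pmod p$ after reducing the identity above modulo $p$, contradicting $\gcd(u,r) = 1$. Hence $x_0 = u$ is an integer and the identity simplifies to
\[
4 a^i (u^3 + u - m^6) = (3u^2 + 1)^2 - 8u(u^3 + u - m^6).
\]

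The main step, and the only place where the arithmetic hypotheses on $a$ and $m$ are used, is a reduction modulo $3$. The assumption $3 \nmid a$ combined with $i$ even gives $a^i \equiv 1 \pmod 3$, while $3 \nmid m$ gives $m^6 \equiv 1 \pmod 3$ by Fermat's little theorem, and of course $u^3 \equiv u \pmod 3$. I plan to substitute these congruences into the simplified identity and watch the two sides collapse: the left-hand side should reduce to $2u - 1 \pmod 3$ and the right-hand side to $1 - u^2 + 2u \pmod 3$, forcing $u^2 \equiv 2 \pmod 3$. Since squares modulo $3$ take only the values $0$ and $1$, this is impossible and the contradiction finishes the proof. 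I do not expect any serious obstacle here; the only thing to watch is that the mod-$3$ bookkeeping really does produce this clean contradiction without needing any further hypotheses on $u$ or $m$.
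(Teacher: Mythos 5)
Your proposal is correct and follows essentially the same route as the paper: the same doubling-formula identity (the paper records it in the expanded form $4a^ir^2(u^3+ur^4-m^6r^6)=u^4-2u^2r^4+8um^6r^6+r^8$, which agrees with yours), the same prime-divisor argument forcing $r=\pm 1$, and the same reduction modulo $3$ using $a^i\equiv m^6\equiv 1\pmod 3$. The only cosmetic difference is that you also invoke $u^3\equiv u$ and $u^4\equiv u^2\pmod 3$ to collapse the final congruence to the clean impossibility $u^2\equiv 2\pmod 3$, whereas the paper stops at $-u(u^3-u^2+u+1)\equiv 2\pmod 3$ and checks the three residues of $u$ directly; both computations are valid and equivalent.
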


\begin{proof}
Just as before, we may write $x(B)=\frac{u}{r^2}$ where $u,r, \in \Z$ with $\gcd(u,r)=1$. Suppose $A=2B$; then the doubling formula \eqref{eq:doubling-formula} yields
\begin{equation}\label{eq:a^i-equation}
a^i(4u^3r^2+4ur^6-4m^6r^8) \equiv u^4 - 2u^2r^4+8um^6r^6 + r^8
\end{equation}
Suppose $r \neq \pm 1$. Then $p|r$ for some prime $p$. Considering \eqref{eq:a^i-equation} modulo $p$ gives the congruence
\[
0 \equiv u^4 \mod p.
\]
Thus $p|u$, but as $\gcd(u,r) = 1$, this is a contradiction. So we may assume $r = \pm 1$, and \eqref{eq:a^i-equation} simplifies to
\begin{equation}\label{eq:family2rpm1}
4a^i(u^3+u-m^6) = u^4 - 2u^2+8um^6 + 1.
\end{equation}
As $3 \nmid a$ and $3\nmid m$, we have $$a^i \equiv m^6 \equiv 1 \mod 3.$$ Thus, if we consider \eqref{eq:family2rpm1} modulo 3, we get
\[
u^3 + u - 1 \equiv u^4 - 2u^2 + 2u + 1 \mod 3,
\]
and after rearranging this becomes
\[
-u(u^3-u^2+u+1)\equiv 2 \mod 3.
\]
But the left-hand side of this congruence can only ever be $0$ or $1$, so we have obtained a
contradiction. We conclude that $A \neq 2B$.
\end{proof}

We now apply this lemma to $P'$ and $Q'$.

\begin{proposition}\label{prop:family2prop1}
The points $P'$ and $Q'$ are nontrivial in $E'_m(\Q)/2E'_m(\Q)$.
\end{proposition}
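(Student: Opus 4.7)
The plan is to apply Lemma \ref{lem:family2prop1} directly, since both of the explicit points $P'$ and $Q'$ have $x$-coordinates that are precisely of the shape $a^{i}$ required by the lemma. Indeed, $x(P') = m^{2}$ and $x(Q') = m^{6}$, so in each case I would set $a = m$.

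I would then simply verify the two hypotheses of the lemma for this choice of $a$. First, the hypothesis of Theorem \ref{thm:main-intro-rk2} gives $m > 1$, so $a > 1$. Second, the standing assumption in this section is that $3 \nmid m$, so $3 \nmid a$. Finally, the exponents $i = 2$ and $i = 6$ are both positive even integers, so the last hypothesis is satisfied as well.

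Therefore Lemma \ref{lem:family2prop1} forbids writing either $P' = 2B$ or $Q' = 2B$ for any $B \in E'_m(\Q)$, which is exactly the statement that $[P']$ and $[Q']$ are nontrivial in $E'_m(\Q)/2E'_m(\Q)$. I do not expect a serious obstacle here; this proposition is really just the payoff of the congruence argument already carried out in Lemma \ref{lem:family2prop1}, and the only content is checking the two arithmetic conditions above. The genuinely substantive step, namely the mod-$3$ analysis ruling out that $A$ is a double, has already been done inside the lemma.
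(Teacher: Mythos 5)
Your proposal is correct and is essentially identical to the paper's own proof, which also deduces the proposition immediately from Lemma \ref{lem:family2prop1} with $a=m$ and $i=2,6$; you merely spell out the hypothesis checks ($m>1$ and $3\nmid m$) a bit more explicitly.
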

\begin{proof}
Recall that $x(P')=m^2$ and $x(Q')=m^6$, where $m>1$ is an odd integer such that $3 \nmid a$. The result is thus immediate from Lemma \ref{lem:family2prop1}.
\end{proof}

It remains to study the point
$P'+Q'$.

\begin{proposition}\label{prop:family2prop-pq}
The point
\[
P'+Q'=\left( \frac{m^4+1}{m^2}, \frac{-2m^4 - 1}{m^3} \right).
\]
is nontrivial in $E'_m(\Q)/2E'_m(\Q)$.
\end{proposition}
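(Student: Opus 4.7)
The plan is to argue by contradiction in the style of Lemma~\ref{lem:family-one-odd-k} and Lemma~\ref{lem:family2prop1}, but with the mod~$3$ reduction replaced by a $2$-adic analysis that takes full advantage of the hypothesis $m\equiv 2\pmod 4$. Suppose $P'+Q'=2B$ for some $B\in E'_m(\Q)$. Since $E'_m$ has no $2$-torsion, $B$ has nonzero $y$-coordinate, and we may write $x(B)=u/r^2$ with $u,r\in\Z$ and $\gcd(u,r)=1$. Substituting this into the doubling formula \eqref{eq:doubling-formula} and setting the result equal to $x(P'+Q')=(m^4+1)/m^2$ yields, after clearing denominators, the Diophantine equation
\begin{equation}\label{eq:family2pq-master}
m^2\bigl(u^4-2u^2r^4+8m^6ur^6+r^8\bigr)=4(m^4+1)r^2\bigl(u^3+ur^4-m^6r^6\bigr).
\end{equation}

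The first step is to show that both $u$ and $r$ must be odd. Reducing \eqref{eq:family2pq-master} modulo~$2$ and using that $m$ is even and $m^4+1$ is odd, one obtains $u^4+r^8\equiv r^2u(u^2+r^4)\pmod 2$. The case analysis using $\gcd(u,r)=1$ rules out $u$ even (and $r$ odd), $u$ odd (and $r$ even), leaving only the case where $u$ and $r$ are both odd. This part is essentially the same kind of parity bookkeeping already seen in the earlier lemmas.

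The main step, and the one where the hypothesis $m\equiv 2\pmod 4$ is crucially used, is a comparison of $2$-adic valuations $v_2$ of the two sides of \eqref{eq:family2pq-master}. Writing $m=2m'$ with $m'$ odd, we have $v_2(m^2)=2$, $v_2(m^4+1)=0$, and $v_2(m^6)=6$. With $u,r$ both odd we have $u^4\equiv r^8\equiv 1\pmod{16}$ and $2u^2r^4\equiv 2\pmod{16}$, while $8m^6ur^6$ has very high $v_2$. Thus
\[
u^4-2u^2r^4+8m^6ur^6+r^8\equiv 1-2+0+1\equiv 0\pmod{16},
\]
so the left side of \eqref{eq:family2pq-master} has $v_2\ge 2+4=6$. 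For the right side, factor $u^3+ur^4-m^6r^6=u(u^2+r^4)-m^6r^6$. Because $u^2\equiv r^4\equiv 1\pmod 8$, one gets $u^2+r^4\equiv 2\pmod 8$, so $v_2\bigl(u(u^2+r^4)\bigr)=1$, and the $m^6r^6$ term contributes at $v_2$ level $6$, hence $v_2(u^3+ur^4-m^6r^6)=1$. Therefore the right side of \eqref{eq:family2pq-master} has $v_2=2+0+0+1=3$. The inequality $6\le v_2(\text{LHS})=v_2(\text{RHS})=3$ is absurd, and this contradiction forces $P'+Q'\notin 2E'_m(\Q)$.

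The main obstacle is just bookkeeping: one needs the cancellation of the odd terms on the left side of \eqref{eq:family2pq-master} to take place modulo a sufficiently high power of~$2$ (at least $16$) that the resulting lower bound on $v_2(\text{LHS})$ strictly exceeds the $v_2(\text{RHS})=3$ coming from the single factor of~$2$ in $u^2+r^4$. Everything else is a routine application of the doubling formula and a mod~$2$ parity check, exactly parallel to the structure of Lemma~\ref{lem:family2prop1}.
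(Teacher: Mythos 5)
Your argument is correct and follows essentially the same route as the paper: both clear denominators in the doubling formula to get the same Diophantine equation, both show $u$ and $r$ must be odd, and both then exploit $m\equiv 2\pmod 4$ through a mod-$16$ (i.e.\ $2$-adic) analysis. Your repackaging of the final contradiction as the valuation comparison $v_2(\mathrm{LHS})\ge 6 > 3 = v_2(\mathrm{RHS})$ is a clean and correct way to phrase what the paper does via congruences.
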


\begin{proof}
Suppose $P'+Q'=2A$ for some $A\in E(\Q)$, and write $x(A)=\frac{u}{r^2}$ where $\gcd(u,r)=1$. By \eqref{eq:doubling-formula}, we must have
\begin{equation}\label{eq:pq-double}
-4r^2(m^4+1)(m^6r^6 - ur^4 - u^3)
=
(r^4 -u^2)^2+ 8m^6ur^6.
\end{equation}
Since $m \equiv 2 \mod 4$, reducing considering \eqref{eq:pq-double} modulo 16 gives
\[
4r^2u(r^4+u^2)\equiv 4(r^4-u^2)^2 \mod 16,
\]
which in turn implies
\begin{equation}\label{pq-double-reduced-mod-4}
r^2u(r^4+u^2)\equiv (r^4-u^2)^2 \mod 4.
\end{equation}

If $r$ is even, then \eqref{pq-double-reduced-mod-4} implies
\[
0 \equiv u^4 \mod 4,
\]
which implies $u$ is also even, contradicting $\gcd(u,r)=1$. We obtain a similar contradiction if we assume $u$ is even; thus we see that both $u$ and $r$ must be odd.

Thus $r^2 \equiv u^2 \equiv 1 \mod 4$, and \eqref{pq-double-reduced-mod-4} becomes
\[
2u \equiv 0 \mod 4,
\]
contradicting the fact that $u$ must be odd. We have thus proved that $P'+Q' \neq 2A$ as desired.
\end{proof}

\begin{proof}[Proof of Theorem \ref{thm:main-intro-rk2}]
Propositions \ref{prop:family2prop1} and \ref{prop:family2prop-pq} show that $P'$ and $Q'$ generate a subgroup of order $4$ in $E'_m(\Q)/2E'_m(\Q)$, so the result follows from Proposition \ref{prop:utility-of-trivial-torsion}.
\end{proof}

\section{Studying the families $E_m$ and $E'_m$ via the canonical height matrix}\label{sec:height-matrix}

In this final section, we once again study the families $E_m$ and $E_m'$, but we will assume only that $m>1$ is an integer. We show via less elementary means that the points
\[
\{P,Q,R\} \subseteq E_m \quad \text{and} \quad \{P',Q'\} \subseteq E_m'
\]
are linearly independent. By Theorem \ref{thm:torsion-family-two} both of these points are of infinite order, so it then follows that the rank of $E(\Q)$ is at least two. In order to prove the independence of $P$ and $Q$, we will use the N\'{e}ron-Tate canonical height pairing matrix, which we now recall for the reader. Our exposition will closely follow that of \cite[$\S$2.7]{alvaro-book}.

\subsection{N\'{e}ron-Tate height, pairing, and matrix} In this section we will describe some generalities which are not specific to our particular curve $E'_m$, and so we write $E/\Q$ for any elliptic curve, and we write $P,Q \in E(\Q)$ for any two rational points on that curve.

Given a rational number $x=\frac{p}{q}$, its \textit{height} is defined as
\[
\log(\max\{|p|,|q| \}).
\]
For any elliptic curve $E/\Q$, this extends to a height function on its rational points via application to the $x$-coordinate:
\begin{align*}
    H \colon E(\Q) &\rightarrow \mathbf{R}\\ P &\mapsto h(x(P)).
\end{align*}
Then for $P \in E(\Q)$, the \textit{N\'{e}ron-Tate  height} (also called the \textit{canonical height}) is defined by
\[
\hat{h}(P)=\frac{1}{2} \lim_{N \to \infty} \frac{H(2^N \cdot P)}{4^N}.
\]
The utility of this normalization procedure is that it produces a height function with very desirable properties, as explained by the next proposition.

\begin{proposition}\label{prop:Neron-tate-height}
Let $E/\Q$ be an elliptic curve and $\hat{h}$ be the canonical height on $E$.
\begin{enumerate}
    \item For all $P,Q \in E(\Q)$, we have the parallelogram law:
    \[
    \hat{h}(P+Q)+\hat{h}(P-Q) = 2 \hat{h}(P) + 2 \hat{h}(Q).
    \]
    \item For all $P \in E(\Q)$ and $m \in \Z$, we have
    \[
    \hat{h}(mP)=m^2 \hat{h}(P).
    \]
    \item Let $P \in E(\Q).$ Then $\hat{h}(P)\geq0$, and $\hat{h}(P)=0$
 if and only if $P$ is a torsion point.
\end{enumerate}
\end{proposition}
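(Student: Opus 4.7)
The plan is to prove the three properties in the order stated, taking as the basic input the standard comparison between the naive height $H$ and its canonical refinement $\hat{h}$. The key technical ingredient is the \emph{quasi-parallelogram law for naive heights}: there is a constant $C_E$ depending only on $E/\Q$ with
\[
\bigl| H(P+Q) + H(P-Q) - 2H(P) - 2H(Q) \bigr| \leq C_E
\]
for all $P, Q \in E(\Q)$. I would establish this by writing $x(P \pm Q)$ explicitly using the addition formulas on the Weierstrass model and bounding numerators and denominators in terms of $H(P)$ and $H(Q)$; the argument is standard though computationally heavy, and in a published version it should be cited rather than reproduced in full.

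Given this estimate, property (1) drops out by applying the quasi-parallelogram law to the pair $(2^N P,\, 2^N Q)$, dividing by $4^N$, and letting $N \to \infty$: the constant error is killed in the limit, leaving the exact parallelogram law for $\hat{h}$. For property (2), I would first note $\hat{h}(\mathcal{O}) = 0$ directly from the definition, then specialize (1) to $Q = P$ to obtain $\hat{h}(2P) = 4\hat{h}(P)$, and finally induct on $|m|$ via the recursion
\[
\hat{h}\bigl((m+1)P\bigr) + \hat{h}\bigl((m-1)P\bigr) = 2\hat{h}(mP) + 2\hat{h}(P),
\]
which is again a specialization of (1). Extension to negative $m$ uses $\hat{h}(-P) = \hat{h}(P)$, itself immediate from $x(-P) = x(P)$.

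For property (3), nonnegativity is immediate from $H \geq 0$ and the definition of $\hat{h}$ as a limit of nonnegative quantities. If $nP = \mathcal{O}$ for some nonzero $n$, then part (2) yields $n^2 \hat{h}(P) = \hat{h}(\mathcal{O}) = 0$, and hence $\hat{h}(P) = 0$. The converse is where the main obstacle lies. For this I would establish a second standard comparison, $\bigl| \hat{h}(P) - \tfrac{1}{2} H(P) \bigr| \leq C'_E$, by a telescoping argument on the defining limit using the doubling estimate $H(2P) = 4H(P) + O(1)$ (a specialization of the quasi-parallelogram law to $Q=P$). Then if $\hat{h}(P) = 0$, part (2) gives $\hat{h}(nP) = 0$ for every $n \in \Z$, so the entire orbit $\{nP : n \in \Z\}$ has uniformly bounded naive height. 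Northcott's finiteness theorem (points of bounded height over $\Q$ form a finite set) forces this orbit to be finite, so $P$ is torsion. The delicate points of the whole argument are thus the two height comparisons and the invocation of Northcott; everything else is formal.
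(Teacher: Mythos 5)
Your outline is correct, and it is essentially the standard argument: the quasi-parallelogram law for naive heights yields the exact parallelogram law in the limit, the multiplication formula follows by the two-step recursion in $m$, and the torsion characterization follows from the comparison $\hat{h} = \tfrac{1}{2}H + O(1)$ together with Northcott's theorem. The paper itself gives no proof of this proposition, deferring entirely to \cite[Chapter VIII, Theorem 9.3]{silverman-arithmetic}; your sketch is precisely the argument found in that reference, so there is nothing to reconcile beyond noting that in the paper's context a citation suffices.
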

We refer the reader to \cite[Chapter VIII, Theorem 9.3]{silverman-arithmetic} for the proof of this proposition.

We may now define the \textit{N\'{e}ron-Tate pairing} by
\begin{align*}
\langle \cdot , \cdot \rangle &\colon E(\Q) \times E(\Q) \to \mathbf{R}\\
\langle P, Q \rangle &= \hat{h}(P+Q)-\hat{h}(P)-\hat{h}(Q).
\end{align*}
This pairing can be shown to be a non-degenerate symmetric bilinear form on $E(\Q)/E(\Q)_{\mathrm{tors}}$ \cite[Theorem 2.8.5]{alvaro-book}. As a consequence of this property, we have the following very useful result.

\begin{proposition}\label{prop:matrix}
Let $E/\Q$ be an elliptic curve and let $P_1,\cdots, P_n \in E(\Q)$ be points of infinite order. Define the height matrix $\mathcal{H}_{P_1,\cdots, P_n}$ by
\[
\mathcal{H}_{P_1,\cdots, P_n}=\left(
\begin{matrix}
\langle P_1, P_1 \rangle & \langle P_1, P_2 \rangle & \cdots & \langle P_1,P_n\rangle \\
\langle P_2, P_1 \rangle & \langle P_2, P_2 \rangle & \cdots & \langle P_2,P_n\rangle \\
\vdots & \vdots & \ddots & \vdots \\
\langle P_n, P_1 \rangle & \langle P_n, P_2 \rangle & \cdots & \langle P_n,P_n\rangle
\end{matrix}
\right)
\]
Then $P_1, \ldots, P_n$ are independent in $E(\Q)$ if and only if $\det \mathcal{H}_{P_1,\cdots, P_n}\neq0$.
\end{proposition}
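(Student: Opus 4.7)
The plan is to reduce Proposition \ref{prop:matrix} to the standard linear algebra fact that the Gram matrix of a tuple of vectors in a real inner product space is nonsingular if and only if those vectors are linearly independent. The non-degeneracy of the N\'{e}ron-Tate pairing, quoted just before the proposition, is the key input; the first step is to upgrade it to genuine positive-definiteness.

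More precisely, combining Proposition \ref{prop:Neron-tate-height}(1) with $Q = P$ and part (2) gives $\langle P, P \rangle = \hat{h}(2P) - 2\hat{h}(P) = 2\hat{h}(P)$, so part (3) yields $\langle P, P \rangle \geq 0$ with equality if and only if $P \in E(\Q)_\tors$. Thus the pairing descends to $E(\Q)/E(\Q)_\tors$ and extends $\RR$-bilinearly to the finite-dimensional real vector space $V := (E(\Q)/E(\Q)_\tors) \otimes_\Z \RR$, where it becomes a positive-definite inner product.

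Next I would verify that for points $P_1, \dots, P_n$ of infinite order, independence in $E(\Q)$ is equivalent to $\RR$-linear independence of their images $\bar P_1, \dots, \bar P_n$ in $V$. Since $E(\Q)/E(\Q)_\tors$ is a finitely generated free $\Z$-module that embeds into $V$, any relation $\sum n_i P_i = \mathcal{O}$ in $E(\Q)$ produces an $\RR$-relation among the $\bar P_i$; conversely, an $\RR$-relation can be cleared (passing through $\QQ$) to a $\Z$-relation modulo torsion, which forces $\sum n_i P_i \in E(\Q)_\tors$, and multiplying through by the exponent of $E(\Q)_\tors$ produces a nontrivial $\Z$-relation in $E(\Q)$.

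Finally, $\mathcal{H}_{P_1, \dots, P_n}$ is by definition the Gram matrix of $\bar P_1, \dots, \bar P_n$ with respect to $\langle \cdot , \cdot \rangle$. A vector $\mathbf{c} = (c_1, \dots, c_n)^T \in \RR^n$ lies in the kernel of $\mathcal{H}$ if and only if $v := \sum c_i \bar P_i$ is orthogonal to every $\bar P_i$; but then $\langle v, v \rangle = \mathbf{c}^T \mathcal{H} \mathbf{c} = 0$, so by positive-definiteness $v = 0$, yielding a nontrivial $\RR$-linear relation. Conversely, any linear relation $\sum c_i \bar P_i = 0$ plainly lies in the kernel of $\mathcal{H}$. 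Hence $\det \mathcal{H} = 0$ precisely when the $\bar P_i$ are $\RR$-linearly dependent, which by the previous paragraph is precisely when $P_1, \dots, P_n$ are dependent in $E(\Q)$. No step presents a real obstacle; the only subtlety worth naming is the passage between linear dependence in $E(\Q)$ and in the ambient vector space $V$.
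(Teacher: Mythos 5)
Your overall strategy is sound and, unlike the paper, actually constitutes a proof: the paper disposes of this proposition entirely by citing \cite[Corollary 2.8.6]{alvaro-book}, whereas you reduce it to the standard fact that a Gram matrix with respect to a positive-definite form is nonsingular exactly when the vectors are linearly independent. The two reductions you carry out explicitly are both correct: the equivalence between $\Z$-independence in $E(\Q)$ and $\RR$-independence of the images in $V=(E(\Q)/E(\Q)_{\mathrm{tors}})\otimes_{\Z}\RR$ (rank of a matrix is unchanged under field extension, and a relation modulo torsion is cleared by multiplying by the exponent of the torsion subgroup), and the kernel computation $\mathbf{c}^{T}\mathcal{H}\mathbf{c}=\langle v,v\rangle$ showing that $\det\mathcal{H}=0$ if and only if the images are $\RR$-linearly dependent.

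There is, however, one genuine gap, and it sits at the heart of the argument: the step ``part (3) yields $\langle P,P\rangle\geq 0$ with equality iff $P$ is torsion, hence the $\RR$-bilinear extension to $V$ is positive definite.'' Strict positivity of a quadratic form on all nonzero points of a lattice does \emph{not} formally imply positive definiteness of its extension to the ambient real vector space: the form $q(x,y)=(x-\alpha y)^{2}$ with $\alpha$ irrational is strictly positive on $\Z^{2}\setminus\{0\}$ yet degenerate on $\RR^{2}$. The same caveat applies if you instead start from the quoted non-degeneracy on $E(\Q)/E(\Q)_{\mathrm{tors}}$, which only says that the kernel of the Gram matrix of a $\Z$-basis meets the lattice trivially, not that it is zero. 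Ruling this out for the N\'{e}ron--Tate height requires an additional input, namely Northcott's property that $\{P\in E(\Q):\hat{h}(P)\leq C\}$ is finite for every $C$; combined with Minkowski's theorem (the region $\{q\leq\epsilon\}$ is convex, symmetric, and of infinite volume if $q$ is degenerate, so it would contain infinitely many lattice points), this forces definiteness of the extension. This is \cite[Chapter VIII, Proposition 9.6]{silverman-arithmetic} and is a genuinely separate statement from the Theorem 9.3 you invoke. Since the proposition you are proving is essentially equivalent to positive definiteness on the real span of the $P_{i}$, this is the one missing idea; with that citation or argument supplied, your proof is complete.
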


\begin{proof} See \cite[Corollary 2.8.6]{alvaro-book}.
\end{proof}

We note that, since the N\'{e}ron-Tate pairing is symmetric, we have $\langle P_i, P_j \rangle = \langle P_j, P_i \rangle$.

\subsection{Computing the height matrix for $P',Q' \in E'_m(\Q)$}
As a concrete example, we will now apply the results of the previous section to our points
\[
P'=(m^2,m) \quad \text{and} \quad Q'=(m^6,m^9)
\]
on the curve $E'_m(\Q)$, where we now assume only that $m > 1$ is an integer.

Before diving into our computations, we note that, by the base change formula for logarithms, it is sufficient for us to take $\log = \log_m$ in our computations of $\hat{h}$. [This explains our assumption that $m>1$ despite our proof of Theorem \ref{thm:torsion-family-two} working for all nonzero $m$.] By making this choice, the computations become relatively straightforward, as we now explain.

For a point $A=(x_0,y_0) \in E(\Q)$, the doubling formula \eqref{eq:doubling-formula}, for our curve $E'_m$ specializes to
\begin{equation}\label{eq:doubling-formula-specialized}
x(2A) = \frac{x_0^4 - 2x_0^2 + 8x_0m^6 + 1}{4(x_0^3+x_0-m^6)}.
\end{equation}
Let us consider $Q'=(m^6,m^9)$, and write
\[
2^N \cdot Q' = \frac{p_N}{q_N}.
\]
Then \eqref{eq:doubling-formula-specialized} implies that for all $N \geq 1$,
\begin{align*}
H(2^N \cdot Q') &= \log (\max\{|p_N|,|q_N|\}) \\
&= \log(p_N)\\
&= \log\left( m^{6 \cdot 4^N} + g_N (m) \right)
\end{align*}
where $g_N(m)$ is a polynomial in $m$ of degree less than $6 \cdot 4^N$. It follows that
\begin{align*}
\hat{h}(Q') &=  \frac{1}{2} \lim_{N \to \infty} \frac{H(2^N \cdot Q')}{4^N} \\
&=\frac{1}{2} \lim_{N \to \infty} \frac{6 \cdot 4^N}{4^N} \\
&=\frac{1}{2} \cdot 6\\
&=3.
\end{align*}

Similarly, one easily computes \[
\hat{h}(P')=1 \quad \text{and} \quad \hat{h}(P'+Q')=2.
\]
Now using Proposition \ref{prop:Neron-tate-height} we can compute the entries of the height pairing matrix $\mathcal{H}_{P',Q'}$. We have
\begin{align*}
\langle P',P' \rangle &= \hat{h}(2\cdot P') - 2 \hat{h}(P') \\
&=2^2 \hat{h}(P') - 2 \hat{h}(P')\\
&=2\hat{h}(P')\\
&=2.
\end{align*}
Similarly, one computes
\begin{align*}
\langle Q', Q' \rangle &= 2 \hat{h}(Q') \\
&=4
\end{align*}
and
\begin{align*}
\langle P',Q' \rangle &= \hat{h}(P'+Q') - \hat{h}(P') - \hat{h}(Q')\\ &=2-1-3\\ &= -2.
\end{align*}
Since the N\'{e}ron-Tate pairing is symmetric, we also have $\langle Q', P' \rangle=-2.$

We may now prove Theorem \ref{thm:unconditional}.

\begin{proof}[Proof of Theorem \ref{thm:unconditional}]
For the family $E_m'$, the computations preceding this proof immediately imply
\[
\mathcal{H}_{P',Q'} = \left(
\begin{matrix}
2 & -2 \\
-2 & 6
\end{matrix}
\right).
\]
We note that this matrix does not depend on the value of $m>1$. Since it has nonzero determinant, Proposition \ref{prop:matrix} tells us that $P'$ and $Q'$ are independent in $E'_m(\Q)$, and so $E'_m(\Q)$ has rank at least two.

We may perform precisely the same type of calculations for the points $P,Q,$ and $R$ on $E_m$ to obtain
\[
\mathcal{H}_{P,Q,R} = \left(
\begin{matrix}
2 & -1 & 0\\
-1 & 2 & -1 \\
0 & -1 & 2
\end{matrix}
\right).
\]
Since this matrix is invertible, we see that Theorem \ref{thm:main-intro-rk3} also holds assuming only that $m>1$.
\end{proof}

\begin{remark}
For many values of $m$, one checks with Sage \cite{sage} that $E'_m(\Q)$ has rank larger than $2$. For instance, the rank is $3$ when $m=4$ or $6$, and the rank is $4$ when $m=7$ or $10$. It would be interesting to explicitly describe subfamilies of $E'_m$ with rank larger than two. In fact, some cases have already been explored in the work of \cite{tadic}.
\end{remark}

\begin{remark} Although we chose not to pursue it here, it should be possible to further strengthen Theorem \ref{thm:unconditional} in several ways. In particular, Silverman's Specialization Theorem should imply that the conclusion of Theorem \ref{thm:unconditional} holds for all but finitely many \textit{rational} values of $m$.
\end{remark}
\bibliographystyle{amsalpha}
\bibliography{references}
\end{document}